\newtheorem{thm}{Theorem}[section]
\newtheorem{prop}[thm]{Proposition}
\newtheorem{define}[thm]{Definition}
\numberwithin{equation}{section}
\subjclass[2010]{35Q35, 35B65, 35Q85, 76W05}
\keywords{Hall-MHD equations, fractional magnetic diffusion, local well-posedness}
\begin{document}
\title[Hall-MHD Equations]{Local well-posedness for the Hall-MHD equations with fractional magnetic diffusion}

\author[D. Chae, R. Wan and J. Wu]{Dongho Chae$^{1}$, Renhui Wan$^{2}$ and  Jiahong Wu$^{3}$}

\address{$^1$ Department of Mathematics,
College of Natural Science,
Chung-Ang University,
Seoul 156-756, Republic of Korea}

\email{dchae@cau.ac.kr}

\address{$^2$ Department of Mathematics,
Zhejiang University,
Hanzhou 310027, China}

\email{rhwanmath@163.com,21235002@zju.edu.cn}

\address{$^3$ Department of Mathematics,
Oklahoma State University,
Stillwater, OK 74078, USA; and
Department of Mathematics,
Chung-Ang University,
Seoul 156-756, Republic of Korea}

\email{jiahong@math.okstate.edu}

\vskip .2in
\begin{abstract}
The Hall-magnetohydrodynamics (Hall-MHD) equations, rigorously derived from kinetic models,
are useful in describing many physical phenomena in geophysics and astrophysics. This paper
studies the local well-posedness of classical solutions to the Hall-MHD equations with
the magnetic diffusion given by a fractional Laplacian operator, $(-\Delta)^\alpha$. Due to
the presence of the Hall term in the Hall-MHD equations, standard energy estimates appear to
indicate that we need $\alpha\ge 1$ in order to obtain the local well-posedness. This paper breaks the barrier and shows that the fractional
Hall-MHD equations are locally well-posed for any $\alpha>\frac12$. The approach here fully exploits
the smoothing effects of the dissipation and establishes the local bounds for the Sobolev norms
through the Besov space techniques. The method presented here
may be applicable to similar situations involving other partial differential equations.
\end{abstract}

\maketitle

\vskip .2in
\section{Introduction}

This paper focuses on the Hall-magnetohydrodynamics (Hall-MHD) equations with fractional magnetic diffusion,
\begin{equation} \label{HallMHD}
\left\{
\begin{array}{l}
\partial_t u + u\cdot\nabla u + \nabla p = B\cdot\nabla B,  \\
\partial_t B + u\cdot\nabla B + \nabla\times((\nabla\times B)\times B) + (-\Delta)^\alpha B =B\cdot\nabla u,\\
\nabla \cdot u=0,\quad \nabla \cdot B=0, \\
u(x,0) =u_0(x), \quad B(x,0) =B_0(x),
\end{array}
\right.
\end{equation}
where $x\in \mathbb{R}^d$ with $d\ge 2$,  $u=u(x,t)$ and $B=B(x,t)$ are vector fields representing
the velocity and the magnetic field, respectively, $p=p(x,t)$ denotes the pressure, $\alpha>0$
is a parameter and the fractional Laplacian $(-\Delta)^\alpha$ is defined through the Fourier transform,
$$
\widehat{(-\Delta)^\alpha f} (\xi) = |\xi|^{2\alpha} \, \widehat{f} (\xi).
$$
For notational convenience, we also use $\Lambda$ for $(-\Delta)^{\frac12}$. The Hall-MHD equations
with the usual Lapalcian dissipation were derived in \cite{Degond4} from kinetic models.
The Hall-MHD equations differ from the standard incompressible MHD equations in the Hall
term $\nabla\times((\nabla\times B)\times B)$, which is important in the study of magnetic
reconnection (see, e.g., \cite{HG,War}). The Hall-MHD equations have been mathematically
investigated in several works (\cite{Degond4,CDL,CL,CS,CW}).
Global weak solutions of (\ref{HallMHD}) with both $\Delta u$ and $\Delta B$ and local classical
solutions of (\ref{HallMHD}) with $\Delta B$ (with or without $\Delta u$) were
obtained in \cite{CDL}. In addition, a blowup criterion and the global existence
of small classical solutions were also established in
\cite{CDL}. These results were later sharpened by \cite{CL}.

\vskip .1in
We examine the issue of whether or not (\ref{HallMHD}) is locally well-posedness when the fractional
power $\alpha<1$.  Previously local solutions of (\ref{HallMHD}) were obtained
for $\alpha=1$ (\cite{CDL,CL}). Standard energy estimates appear to indicate that $\alpha \ge 1$
is necessary in order to obtain local bounds for the solutions in Sobolev spaces. This requirement comes from the
estimates of the regularity-demanding Hall term $\nabla\times((\nabla\times B)\times B)$. To understand more
precisely the issue at hand, we perform a short energy estimate on the essential part of the
equation for $B$,
$$
\partial_t B + \nabla\times((\nabla\times B)\times B) + (-\Delta)^\alpha B =0.
$$
The global $L^2$-bound
\begin{equation} \label{l2in}
\|B(t)\|_{L^2}^2 + 2\, \int_0^t \|\Lambda^\alpha B(\tau)\|_{L^2}^2 d\tau = \|B_0\|_{L^2}^2
\end{equation}
follows from the simple fact
\begin{align*}
\int \nabla\times((\nabla\times B)\times B) \cdot B
=\int ((\nabla\times B)\times B)\cdot (\nabla\times B)=0.
\end{align*}
To obtain the $H^1$-bound, we invoke the equation for $\|\nabla B\|_{L^2}^2$,
\begin{align*}
\frac12 \frac{d}{dt} \|\nabla B\|_{L^2}^2  + \|\Lambda^\alpha \nabla B\|_{L^2}^2 =
-\sum_{i=1}^d \int \partial_i \nabla\times((\nabla\times B)\times B) \cdot \partial_i B.
\end{align*}
We can indeed shift one-derivative, namely
\begin{align*}
\int \partial_i \nabla\times((\nabla\times B)\times B) \cdot \partial_i B
= \int ((\nabla\times B) \times \partial_i B) \cdot \partial_i \nabla\times B.
\end{align*}
H\"{o}lder's inequality allows us to conclude that
\begin{align*}
\frac12 \frac{d}{dt} \|\nabla B\|_{L^2}^2  + \|\Lambda^\alpha \nabla B\|_{L^2}^2
\le \|\nabla B\|_{L^2}\,\|\nabla B\|_{L^\infty}\,\|\nabla\nabla\times B\|_{L^2}.
\end{align*}
Therefore, it appears that we need $\alpha\ge 1$ in order to bound the term
$\|\nabla\nabla\times B\|_{L^2}$ on the
right-hand side. More generally, the energy inequality involving the $H^\sigma$-norm
\begin{align*}
\frac{d}{dt} \|B\|_{H^\sigma}^2  + \|\Lambda^{\alpha} B\|_{H^\sigma}^2
\le C\, \| B\|_{H^\sigma}\,\|\nabla B\|_{L^\infty}\,\|\nabla B\|_{H^\sigma}.
\end{align*}
also appears to demand that $\alpha \ge 1$ in order to bound $\|\nabla B\|_{H^\sigma}$.

\vskip .1in
This paper obtains the local existence and
uniqueness of solutions to (\ref{HallMHD}) with any $\alpha>\frac12$.
More precisely, we prove the following theorem.

\begin{thm} \label{main}
Consider (\ref{HallMHD}) with $\alpha>\frac12$. Assume $(u_0, B_0)\in H^\sigma(\mathbb{R}^d)$
with $\sigma>1+ \frac{d}2$, and $\nabla\cdot u_{0}=\nabla\cdot B_{0}=0$. Then there exist $T_0=T_0(\|(u_0, B_0)\|_{H^\sigma})>0$ and a unique
solution $(u,B)$ of (\ref{HallMHD}) on $[0,T_0]$ such that
$$
(u, B) \in L^\infty([0,T_0]; H^\sigma(\mathbb{R}^d)).
$$
In addition, for any $\sigma'<\sigma$,
$$
(u, B) \in C([0,T_0]; H^{\sigma'}(\mathbb{R}^d))
$$
and $\|(u(t), B(t))\|_{H^\sigma}$ is continuous from the right on $[0,T_0)$.
\end{thm}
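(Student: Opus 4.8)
The plan is to construct the solution through a Friedrichs-mollified approximation scheme and to pass to the limit using uniform bounds obtained in the Besov framework, controlling $B$ in $\tilde{L}^\infty_T B^\sigma_{2,1}\cap\tilde{L}^1_T B^{\sigma+2\alpha}_{2,1}$ and $u$ in $\tilde{L}^\infty_T B^\sigma_{2,1}$, with uniqueness and time-continuity obtained afterward by lower-order estimates. The whole argument rests on one a priori inequality, so I would first record the linear smoothing estimate for $\p_t B+(-\Delta)^\alpha B=f$,
\[
\|B\|_{\tilde{L}^\infty_T B^\sigma_{2,1}}+\|B\|_{\tilde{L}^1_T B^{\sigma+2\alpha}_{2,1}}\lesssim \|B_0\|_{B^\sigma_{2,1}}+\|f\|_{\tilde{L}^1_T B^\sigma_{2,1}}.
\]
This encodes the \emph{full} gain of $2\alpha$ derivatives in the time-integrated norm, a gain that is invisible to the $L^2$-based estimates of the Introduction (which see only a gain of $\alpha$ in $L^2_T$), and it is precisely why the summability index $1$ is used.

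The central step is the nonlinear estimate, where the only term resisting the naive product law is the top-order transport part $(B\cdot\nabla)(\nabla\times B)$ of the Hall term. Applying $\Delta_j$ and pairing with $\Delta_j B$, I would write $\Delta_j[(B\cdot\nabla)(\nabla\times B)]=(B\cdot\nabla)\Delta_j(\nabla\times B)$ modulo a commutator and then, on the leading piece, perform the same manipulation as the one-derivative shift displayed in the Introduction, now with $\Delta_j B$ in place of $\p_i B$. Using $\nabla\cdot B=0$, this collapses the two ``bad'' derivatives: only one derivative is left on $\Delta_j B$ while the second is transferred onto $B$ as a factor $\nabla B\in L^\infty$, giving the dyadic bound $\lesssim\|\nabla B\|_{L^\infty}\,2^{j}\|\Delta_j B\|_{L^2}^2$ and, after summation, a contribution $\lesssim\|\nabla B\|_{L^\infty}\|B\|_{B^{\sigma+1}_{2,1}}$ to the evolution of $\|B\|_{B^\sigma_{2,1}}$. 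The commutator and the genuinely quadratic remainder $(\nabla\times B)\times\nabla B$ are then handled by standard commutator and product estimates and produce the same type of factor.

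Two structural points close the estimate. First, the magnetic stretching $B\cdot\nabla u$ in the $B$-equation would demand $u\in B^{\sigma+1}_{2,1}$, which $u$, having no dissipation, does not possess; I would therefore estimate $u$ and $B$ \emph{simultaneously} and exploit the cancellation $\int(B\cdot\nabla\Delta_j B)\cdot\Delta_j u+\int(B\cdot\nabla\Delta_j u)\cdot\Delta_j B=0$ (again from $\nabla\cdot B=0$) between the Lorentz force and the stretching term, so that only order-$\sigma$ commutators survive and no extra regularity on $u$ is needed. Second, setting $A(T)=\|(u,B)\|_{\tilde{L}^\infty_T B^\sigma_{2,1}}$ and $D(T)=\|B\|_{\tilde{L}^1_T B^{\sigma+2\alpha}_{2,1}}$, the collected contributions give an inequality of the schematic form
\[
A(T)+D(T)\le C\|(u_0,B_0)\|_{B^\sigma_{2,1}}+C\,T\,A(T)^2+C\,A(T)\,D(T),
\]
in which the critical Hall contribution is exactly $A(T)D(T)$, obtained by bounding $\|\nabla B\|_{L^\infty}$ by $A(T)$ and $\int_0^T\|B\|_{B^{\sigma+1}_{2,1}}\,d\tau$ by $D(T)$, using precisely $\sigma+1\le\sigma+2\alpha\Leftrightarrow\alpha\ge\tfrac12$. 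Since the linear part of $D(T)$ tends to $0$ as $T\to0$ (by dominated convergence in the frequency sum, even though $B_0$ lies only in $B^\sigma_{2,1}$), a continuity/bootstrap argument yields a uniform bound on $[0,T_0]$ with $T_0=T_0(\|(u_0,B_0)\|_{B^\sigma_{2,1}})$, for data of arbitrary size.

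With the uniform bound in hand I would pass to the limit by a standard compactness argument (Aubin--Lions, using the equations to control $\p_t$), prove uniqueness by estimating the difference of two solutions in $B^{\sigma-1}_{2,1}$ — where the same two cancellations apply and the term lacking a cancellation is absorbed using the $\tilde{L}^1_T B^{\sigma+1}_{2,1}$ smoothing of the known solution — and read off the time-continuity from $B\in\tilde{L}^1_T B^{\sigma+2\alpha}_{2,1}\cap\tilde{L}^\infty_T B^\sigma_{2,1}$ together with the transport regularity of $u$. To reach the stated $H^\sigma$ conclusion from $H^\sigma$ data, I would run the construction at level $B^{s}_{2,1}$ with $1+\tfrac d2<s<\sigma$ (legitimate since $H^\sigma\hookrightarrow B^s_{2,1}$), thereby securing the a priori bound on $\|B\|_{\tilde{L}^1_T B^{\sigma+2\alpha-\varepsilon}_{2,1}}$ and hence, for $\varepsilon$ small and $2\alpha>1$, on $\|B\|_{L^1_T H^{\sigma+1}}$; a plain $\ell^2$ energy estimate in $H^\sigma$ then closes by Gr\"onwall, because its only dangerous term $\int_0^T\|\nabla B\|_{L^\infty}\|B\|_{H^{\sigma+1}}\,d\tau$ is now a \emph{finite, already-controlled} quantity rather than something to be absorbed by the instantaneous dissipation. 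The main obstacle throughout is the Hall term: everything hinges on recognizing that its top-order transport part enjoys the integration-by-parts cancellation above, leaving a single surplus derivative absorbed not by the dissipation (which would force $\alpha\ge1$) but by the time-integrated smoothing, available exactly when $2\alpha>1$.
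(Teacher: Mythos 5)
Your overall strategy is sound and, at its core, rests on the same two observations as the paper: the Hall term's worst part is a transport term whose top-order contribution cancels against $\Delta_j\nabla\times B$ (this is exactly the paper's treatment of $K_5$ via $(B\times(\Delta_l\nabla\times B))\cdot\Delta_l\nabla\times B=0$), and the single surplus derivative that survives must be paid for by the smoothing of $(-\Delta)^\alpha$ rather than by the instantaneous dissipation, which is possible precisely when $2\alpha>1$. Your framework is genuinely different — Chemin--Lerner norms $\tilde{L}^\infty_T B^\sigma_{2,1}\cap\tilde{L}^1_T B^{\sigma+2\alpha}_{2,1}$ and a two-tier argument (construct at a lower index $s$, then propagate $H^\sigma$ by Gr\"onwall using the already-finite quantity $\int_0^T\|\nabla B\|_{L^\infty}\|B\|_{H^{\sigma+1}}\,d\tau$) — whereas the paper stays in $B^\sigma_{2,2}=H^\sigma$ and absorbs the surplus half-derivative on the spot by interpolation and Young's inequality. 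Your route, if completed, yields somewhat stronger smoothing information; the paper's is lighter on machinery.

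However, there is a genuine gap at the closing step. The schematic inequality
\[
A(T)+D(T)\le C\|(u_0,B_0)\|_{B^\sigma_{2,1}}+C\,T\,A(T)^2+C\,A(T)\,D(T)
\]
is the \emph{critical} form of the estimate (you even write $\alpha\ge\tfrac12$ at this point), and a continuity/bootstrap argument on it cannot produce a local solution for data of arbitrary size: on any bootstrap regime one has $A(T)\approx C\|(u_0,B_0)\|$, which does not shrink as $T\to0$, so the coefficient $C\,A(T)$ multiplying $D(T)$ on the right is not small for large data and the term $C\,A(T)\,D(T)$ cannot be absorbed into the left-hand side. The observation that the \emph{linear} part of $D(T)$ vanishes as $T\to0$ does not rescue this, because the dangerous product involves the full $D(T)$, whose nonlinear part is exactly what you are trying to control. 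To get the stated large-data result you must use $\alpha>\tfrac12$ \emph{strictly}: interpolate $\|B\|_{B^{\sigma+1}}\le\|B\|_{B^{\sigma}}^{1-\frac{1}{2\alpha}}\|B\|_{B^{\sigma+2\alpha}}^{\frac{1}{2\alpha}}$ so that the Hall contribution becomes either $T^{1-\frac{1}{2\alpha}}A(T)^{2-\frac{1}{2\alpha}}D(T)^{\frac1{2\alpha}}$ (a positive power of $T$, after which the bootstrap closes) or, in the paper's version, $\varepsilon\|B\|^2_{H^{\sigma+\alpha}}+C_\varepsilon\|\nabla B\|_{L^\infty}^{1/\theta}\|B\|^2_{H^\sigma}$ with $\theta=1-\frac1{2\alpha}\in(0,1)$, where the first piece is absorbed by the dissipation with a data-independent constant and the second is superlinear in the controlled quantity, giving a Riccati-type local bound. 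This interpolation is the one quantitative point your write-up is missing; everything else (the cancellations, the Friedrichs scheme, compactness, uniqueness at lower regularity, and the right-continuity of the $H^\sigma$ norm) is consistent with a correct proof.
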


The essential idea of proving Theorem \ref{main} is to fully
exploit the dissipation in the equation for $B$ and
estimate the Sobolev norm $\|(u, B)\|_{H^\sigma}$ via
Besov space techniques. We identify $H^\sigma$ with the Besov
space $B^\sigma_{2,2}$ and suitably shift the derivatives
in the nonlinear term. The definition of Besov spaces and related facts
used in this paper are provided in the appendix.
The rest of this paper is divided into two sections followed by an appendix.
Section \ref{sec:localbd} states and proves the result for the local {\it a priori} bound. Section \ref{sec:mainproof} presents the
complete proof of Theorem \ref{main}. The appendix supplies the definitions of the Littlewood-Paley decomposition and Besov spaces.

\vskip .3in
\section{Local a priori bound}
\label{sec:localbd}

This section establishes a local {\it a priori} bound for smooth solutions of (\ref{HallMHD}), which is
the key component in the proof of Theorem \ref{main}. The result for the local {\it a priori} bound can be stated
as follows.
\begin{prop} \label{localbd}
Consider (\ref{HallMHD}) with $\alpha>\frac12$. Assume the initial data $(u_0, B_0)
\in H^\sigma(\mathbb{R}^d)$ with $\sigma >1+ \frac{d}{2}$. Let $(u,B)$ be the corresponding
solution. Then, there exists $T_0=T_0(\|(u_0, B_0)\|_{H^\sigma})>0$ such that, for $t\in [0,T_0]$,
$$
\|(u(t), B(t))\|_{H^\sigma} \le C(\alpha, T_0, \|(u_0, B_0)\|_{H^\sigma})
$$
and
$$
\int_0^{T_0} \|\Lambda^\alpha B(s)\|_{H^\sigma}^2\,ds \le C(\alpha, T_0, \|(u_0, B_0)\|_{H^\sigma}).
$$
\end{prop}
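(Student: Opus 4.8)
The plan is to run a frequency-localized energy estimate, identifying $H^\sigma$ with $B^\sigma_{2,2}$. I would apply the dyadic block $\Delta_j$ (from the appendix) to both equations, pair $\Delta_j$ of the $u$-equation with $\Delta_j u$ and $\Delta_j$ of the $B$-equation with $\Delta_j B$, integrate over $\mathbb{R}^d$, and sum against the weights $2^{2j\sigma}$. The pressure drops out since $\nabla\cdot\Delta_j u=0$, and Bernstein's inequality upgrades the dissipation to a genuine gain, $\sum_j 2^{2j\sigma}\|\Lambda^\alpha\Delta_j B\|_{L^2}^2\gtrsim\|\Lambda^{\sigma+\alpha}B\|_{L^2}^2$ up to lower-order low-frequency terms. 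The target is the differential inequality
\[
\frac{d}{dt}\big(\|u\|_{H^\sigma}^2+\|B\|_{H^\sigma}^2\big)+\|\Lambda^{\sigma+\alpha}B\|_{L^2}^2\le C\,\big(1+\|u\|_{H^\sigma}^2+\|B\|_{H^\sigma}^2\big)^r
\]
for a suitable $r>1$, from which both stated bounds follow by an ODE comparison on a short interval.

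Most of the nonlinear terms are handled by standard paraproduct and commutator estimates and never call on the dissipation. For the transport terms $u\cdot\nabla u$ and $u\cdot\nabla B$ the top-order contribution vanishes because $\nabla\cdot u=0$, leaving commutators $[\Delta_j,u\cdot\nabla]$ that sum to $\|\nabla u\|_{L^\infty}(\|u\|_{H^\sigma}^2+\|B\|_{H^\sigma}^2)$. The coupling terms $B\cdot\nabla B$ in the $u$-equation and $B\cdot\nabla u$ in the $B$-equation I would keep together: their leading parts cancel in the combined estimate since $\int(B\cdot\nabla)(\Lambda^\sigma B\cdot\Lambda^\sigma u)=0$ by $\nabla\cdot B=0$, again leaving only commutator remainders bounded by $\|\nabla B\|_{L^\infty}(\|u\|_{H^\sigma}^2+\|B\|_{H^\sigma}^2)$. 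Since $\sigma>1+\tfrac{d}{2}$, the embedding $H^\sigma\hookrightarrow W^{1,\infty}$ bounds $\|\nabla u\|_{L^\infty}$ and $\|\nabla B\|_{L^\infty}$ by $(\|u\|_{H^\sigma}^2+\|B\|_{H^\sigma}^2)^{1/2}$, so all of these produce polynomial-in-norm contributions.

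The decisive step is the Hall term, and it is here that both the cancellation structure and $\alpha>\tfrac12$ enter. Using $\nabla\cdot B=\nabla\cdot(\nabla\times B)=0$ I would rewrite it as
\[
\nabla\times((\nabla\times B)\times B)=(B\cdot\nabla)(\nabla\times B)-((\nabla\times B)\cdot\nabla)B .
\]
After localizing and pairing with $\Delta_j B$, the most singular contribution is the transport paraproduct $(S_{j-1}B\cdot\nabla)(\nabla\times\Delta_j B)$ tested against $\Delta_j B$; estimated crudely this is $O(2^{2j})\|\Delta_j B\|_{L^2}^2$, which would force $\alpha\ge1$. The gain comes from the identity, valid for divergence-free $v$,
\[
\int (v\cdot\nabla)(\nabla\times a)\cdot a=\tfrac12\int\varepsilon_{ijk}(\partial_j v_l)(\partial_l a_i)\,a_k ,
\]
which, after one integration by parts, trades one of the two top-order derivatives on $a=\Delta_j B$ for a derivative on the low-frequency coefficient $v=S_{j-1}B$, reducing the bound to $\|\nabla B\|_{L^\infty}\|\nabla\Delta_j B\|_{L^2}\|\Delta_j B\|_{L^2}\lesssim 2^{j}\|\nabla B\|_{L^\infty}\|\Delta_j B\|_{L^2}^2$. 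The remaining paraproduct and remainder pieces of both $(B\cdot\nabla)(\nabla\times B)$ and $((\nabla\times B)\cdot\nabla)B$ either vanish by $\nabla\cdot B=0$ or $\nabla\cdot(\nabla\times B)=0$, or obey the same $O(2^{j})$ bound directly by Bernstein's inequality. Multiplying by $2^{2j\sigma}$ and summing, the Hall term is controlled by $\|\nabla B\|_{L^\infty}\sum_j 2^{j(2\sigma+1)}\|\Delta_j B\|_{L^2}^2$. Splitting $2\sigma+1=2(\sigma+\alpha)\theta+2\sigma(1-\theta)$ with $\theta=\tfrac1{2\alpha}$ — which lies in $(0,1)$ exactly because $\alpha>\tfrac12$ — and applying Hölder in $j$ followed by Young's inequality gives
\[
\|\nabla B\|_{L^\infty}\sum_j 2^{j(2\sigma+1)}\|\Delta_j B\|_{L^2}^2\le\tfrac12\|\Lambda^{\sigma+\alpha}B\|_{L^2}^2+C\,\|\nabla B\|_{L^\infty}^{\frac1{1-\theta}}\|B\|_{H^\sigma}^2 ,
\]
whose first term is absorbed into the dissipation and whose second term is polynomial in $\|B\|_{H^\sigma}$.

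Collecting everything yields the target inequality with $X=\|u\|_{H^\sigma}^2+\|B\|_{H^\sigma}^2$ and some $r>1$. Comparing with $\dot Y=C(1+Y)^r$ produces a time $T_0=T_0(\|(u_0,B_0)\|_{H^\sigma})>0$ on which $X(t)$ stays bounded, which is the first claim; integrating the differential inequality over $[0,T_0]$ and inserting the bound on $X$ controls $\int_0^{T_0}\|\Lambda^{\sigma+\alpha}B\|_{L^2}^2\,dt$, and since $\|\Lambda^\alpha B\|_{H^\sigma}^2\approx\|\Lambda^\alpha B\|_{L^2}^2+\|\Lambda^{\sigma+\alpha}B\|_{L^2}^2$ with the first piece dominated by $\|B\|_{H^\sigma}^2$, this gives the second claim. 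The main obstacle throughout is the Hall term: isolating the cancellation that reduces its top-order growth from $2^{2j}$ to $2^{j}$, and then recovering the missing half-derivative from the fractional dissipation — possible precisely because $\alpha>\tfrac12$.
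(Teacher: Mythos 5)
Your proposal is correct and follows essentially the same route as the paper: a dyadic energy estimate identifying $H^\sigma$ with $B^\sigma_{2,2}$, an antisymmetry cancellation that reduces the top-order cost of the Hall term from $2^{2j}$ to $2^{j}$, interpolation of the resulting $H^{\sigma+\frac12}$-type quantity between $H^\sigma$ and $H^{\sigma+\alpha}$ (possible exactly when $\alpha>\frac12$), absorption into the dissipation by Young's inequality, and closure via a Riccati-type ODE comparison. The only difference is cosmetic: you realize the Hall cancellation through the double integration-by-parts identity $\int (v\cdot\nabla)(\nabla\times a)\cdot a=-\tfrac12\int\varepsilon_{mnp}(\partial_n v_l)(\partial_l a_p)\,a_m$, whereas the paper moves the curl onto the test function and uses $(B\times(\Delta_l\nabla\times B))\cdot\Delta_l\nabla\times B=0$ together with the identity $B\times(\nabla\times B)=\tfrac12\nabla|B|^2-(B\cdot\nabla)B$ to produce commutators — the same cancellation in different notation.
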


\begin{proof}[Proof of Proposition \ref{localbd}] The proof
identifies the Sobolev space $H^\sigma$ with the
Besov space $B^\sigma_{2,2}$ and resorts to Besov space techniques.

\vskip .1in
Let $l\ge -1$ be an integer and let $\Delta_l$ denote the homogeneous frequency localized operator.   Applying $\Delta_l$ to (\ref{HallMHD}) yields
\begin{align*}
& \partial_t \Delta_l u + \Delta_l(u\cdot\nabla u) + \nabla \Delta_l p = \Delta_l(B\cdot\nabla B), \\
& \partial_t \Delta_l B + \Delta_l(u\cdot\nabla B) + \Delta_l \nabla\times((\nabla\times B)\times B)
+ (-\Delta)^\alpha \Delta_l B =\Delta_l(B\cdot\nabla u).
\end{align*}
Taking the inner product with $(\Delta_l u, \Delta_l B)$ and integrating by parts, we have
\begin{align}
\frac12 \frac{d}{dt} \left(\|\Delta_l u\|_{L^2}^2 + \|\Delta_l B\|_{L^2}^2\right) + C_0 2^{2\alpha l} \|\Delta_l B\|_{L^2}^2
= K_1 + K_2 + K_3 + K_4 + K_5, \label{rootnode}
\end{align}
where
\begin{align*}
&K_1 = -\int [\Delta_l, u\cdot\nabla] u\cdot \Delta_l u, \qquad K_2 = -\int [\Delta_l, u\cdot\nabla] B\cdot \Delta_l B,\\
&K_3 = \int [\Delta_l, B\cdot\nabla] B\cdot \Delta_l u, \qquad K_4 = \int [\Delta_l, B\cdot\nabla] u\cdot \Delta_l B,\\
&K_5 =  - \int \Delta_l \nabla\times((\nabla\times B)\times B) \cdot \Delta_l B.
\end{align*}
Note that we have used the standard commutator notation,
$$
[\Delta_l, u\cdot\nabla] u = \Delta_l(u\cdot\nabla u) - u\cdot\nabla (\Delta_lu)
$$
and applied the lower bound, for a constant $C_0>0$,
$$
\int \Delta_l B \cdot (-\Delta)^\alpha \Delta_l B \ge C_0 2^{2\alpha l} \|\Delta_l B\|_{L^2}^2.
$$
Using the notion of paraproducts, we write
$$
K_1=K_{11} + K_{12} + K_{13},
$$
where
\begin{align*}
K_{11} =& \sum_{|k-l|\le 2} \int \left(\Delta_l(S_{k-1} u \cdot \nabla\Delta_k u) - S_{k-1} u \cdot\nabla\Delta_l\Delta_k u\right)\,\cdot \Delta_lu, \\
K_{12} =& \sum_{|k-l|\le 2} \int \left(\Delta_l(\Delta_k u \cdot \nabla S_{k-1} u) - \Delta_k u \cdot\nabla\Delta_lS_{k-1} u
\right)\,\cdot \Delta_lu, \\
K_{13}= & \sum_{k\ge l-1} \int \left(\Delta_l(\Delta_k u \cdot \nabla\widetilde{\Delta}_k u) -
\Delta_k u \cdot \nabla\Delta_l\widetilde{\Delta}_k u\right)\,\cdot \Delta_lu
\end{align*}
with $\widetilde{\Delta}_k =\Delta_{k-1} + \Delta_k + \Delta_{k+1}$.
By H\"{o}lder's inequality and a standard commutator estimate,
\begin{align*}
|K_{11}| \le & C\,\|\nabla S_{l-1} u\|_{L^\infty} \, \|\Delta_l u\|_{L^2} 
\sum_{|k-l|\le 2} \|\Delta_k u\|_{L^2}\\
 \le &  C\, \|\nabla u\|_{L^\infty} \, \|\Delta_l u\|_{L^2}
\sum_{|k-l|\le 2} \|\Delta_k u\|_{L^2}.
\end{align*}
Since the summation over $k$ for fixed $l$ above consists of only a finite 
number of terms and, as we shall later in the proof, the norm generated by each term is a multiple of that generated by the typical term,  it suffices to keep the typical term with $k=l$ and ignore the summation. This would 
help keep our presentation concise. We will invoke this practice throughout
the rest of the paper. 
By H\"{o}lder's inequality, $K_{12}$ is bounded by
$$
|K_{12}| \le C\, \|\nabla u\|_{L^\infty}\,\|\Delta_l u\|^2_{L^2}.
$$
By H\"{o}lder's inequality and Bernstein's inequality,
$$
|K_{13}| \le C\, \|\Delta_l u\|_{L^2}\,\|\nabla u\|_{L^\infty} \, \sum_{k\ge l-1} 2^{l-k}\, \|\Delta_k u\|_{L^2}.
$$
Therefore,
\begin{align*}
|K_1|  \le C\, \|\Delta_l u\|_{L^2}\, \|\nabla u\|_{L^\infty} \,\left(\|\Delta_l u\|_{L^2} + \sum_{k\ge l-1} 2^{l-k}\, \|\Delta_k u\|_{L^2}\right).
\end{align*}
Similarly, $K_2$, $K_3$ and $K_4$ are bounded by
\begin{align*}
|K_2|  \le& \,C\, \|\nabla u\|_{L^\infty} \,\|\Delta_l B\|^2_{L^2} +  C\, \|\nabla B\|_{L^\infty} \,\|\Delta_l u\|_{L^2}\|\Delta_l B\|_{L^2}\\
&+\, C\,\|\nabla u\|_{L^\infty} \,\|\Delta_l B\|_{L^2}\sum_{k\ge l-1} 2^{l-k}\, \|\Delta_k B\|_{L^2},
\end{align*}
\begin{align*}
|K_3|  \le C\, \|\nabla B\|_{L^\infty} \,\|\Delta_l u\|_{L^2}\,\,\left(\|\Delta_l B\|_{L^2}
+ \sum_{k\ge l-1} 2^{l-k}\, \|\Delta_k B\|_{L^2}\right),
\end{align*}
\begin{align*}
|K_4|  \le &\, C\, \|\nabla B\|_{L^\infty}\, \|\Delta_l u\|_{L^2}\,\|\Delta_l B\|_{L^2}\,+ C\, \|\nabla u\|_{L^\infty} \,\|\Delta_l B\|^2_{L^2} \\
&+ C\, \|\nabla B\|_{L^\infty}\,\|\Delta_l u\|_{L^2} \sum_{k\ge l-1} 2^{l-k}\, \|\Delta_k B\|_{L^2}.
\end{align*}
Using the simple fact
$$
(B\times (\Delta_l\nabla\times B))\cdot \Delta_l\nabla\times B =0
$$
and the vector identity 
$$
B\times (\nabla\times B)= \frac12 \nabla (B\cdot B) -(B\cdot\nabla) B,
$$
we can rewrite $K_5$ as 
\begin{align}
 K_5 =& \int  \left(\Delta_l(B\times (\nabla\times B))-B\times (\Delta_l\nabla\times B)\right) \cdot \Delta_l\nabla\times B \notag\\
 =& -\int  [\Delta_l, B\cdot\nabla] B 
    \cdot \Delta_l\nabla\times B \label{K3like}\\
  & + \int  \left(\Delta_l\left(\frac12 \nabla (B\cdot B)\right)- (\nabla \Delta_l B)\cdot B\right) \cdot \Delta_l\nabla\times B. \label{K5like}
\end{align}
The term in (\ref{K3like}) can be estimated in a similar way 
as $K_3$. To estimate the term in (\ref{K5like}), we use the the 
notion of paraproducts to write
$$
\int  \left(\Delta_l\left(\frac12 \nabla (B\cdot B)\right)- (\nabla \Delta_l B)\cdot B\right) \cdot \Delta_l\nabla\times B = K_{51} + K_{52} + K_{53},
$$
where,
\begin{align*}
K_{51} =& \sum_{|k-l|\le 2} \int \left(\Delta_l((\nabla S_{k-1} B)\cdot \Delta_k B) - (\nabla \Delta_l S_{k-1} B) \cdot\Delta_k B\right)\,\cdot \Delta_l\nabla\times B, \\
K_{52} =& \sum_{|k-l|\le 2} \int \left(\Delta_l(S_{k-1} B \cdot (\nabla \Delta_k B)) - (S_{k-1}B) \cdot (\nabla \Delta_l\Delta_k B)
\right)\,\cdot \Delta_l\nabla\times B, \\
K_{53} = & \sum_{k\ge l-1} \int \left(\Delta_l\left(\nabla\left(\frac12\Delta_k B \cdot \widetilde{\Delta}_k B\right)\right)
- (\nabla\Delta_l\Delta_k B) \cdot \widetilde{\Delta}_k B\right)\,\cdot \Delta_l\nabla\times B.
\end{align*}
By H\"{o}lder's inequality,
\begin{align*}
|K_{51}| \le & \|\Delta_l((\nabla S_{k-1} B)\cdot \Delta_k B) - (\nabla \Delta_l S_{k-1} B) \cdot\Delta_k B\|_{L^2} \,
\|\Delta_l\nabla\times B\|_{L^2} \\
\le & C\,2^l\,\|\nabla S_{l-1} B\|_{L^\infty} \, \|\Delta_l B\|^2_{L^2} \le C\,2^l\,\|\nabla B\|_{L^\infty} \, \|\Delta_l B\|^2_{L^2}.
\end{align*}
By H\"{o}lder's inequality and a standard commutator estimate,
$$
|K_{52}| \le C\,2^l\,\|\nabla B\|_{L^\infty} \, \|\Delta_l B\|^2_{L^2}.
$$
By H\"{o}lder's inequality and Bernstein's inequality,
$$
|K_{53}| \le C\, 2^l\,\|\nabla B\|_{L^\infty} \, \|\Delta_l B\|_{L^2}  \sum_{k\ge l-1} 2^{l-k}\, \|\Delta_k B\|_{L^2}.
$$
Therefore,
\begin{align*}
|K_5| \le C\, 2^l\,\|\nabla B\|_{L^\infty} \, \|\Delta_l B\|_{L^2} \left(\|\Delta_l B\|_{L^2}
+ \sum_{k\ge l-1} 2^{l-k}\, \|\Delta_k B\|_{L^2}\right).
\end{align*}
%By Young's inequality,
%\begin{align*}
%|K_5| \le& \frac{C_0}{2} 2^{2\alpha l} \|\Delta_l B\|_{L^2}^2 + C\, %2^{2(1-\alpha)l}
%\|\nabla B\|^2_{L^\infty} \, \|\Delta_l B\|^2_{L^2}\\
%& + C\, 2^{2(1-\alpha)l} \|\nabla B\|^2_{L^\infty} \, \left(\sum_{k\ge l-1} %2^{l-k}\, \|\Delta_k B\|_{L^2}\right)^2.
%\end{align*}
Inserting the estimates above in (\ref{rootnode}), we obtain
\begin{align*}
& \frac{d}{dt} \left(\|\Delta_l u\|_{L^2}^2 + \|\Delta_l B\|_{L^2}^2\right) + C_0 2^{2\alpha l} \|\Delta_l B\|_{L^2}^2 \\
& \quad \le C\,\|(\nabla u, \nabla B)\|_{L^\infty} \,(\|\Delta_l u\|_{L^2}^2 +\|\Delta_l B\|^2_{L^2})\\
&\quad\,\,+ C\,\|(\nabla u, \nabla B)\|_{L^\infty}\,\left[\left(\sum_{k\ge l-1} 2^{l-k}\, \|\Delta_k u\|_{L^2}\right)^2 + \left(\sum_{k\ge l-1} 2^{l-k}\, \|\Delta_k B\|_{L^2}\right)^2\right]\\
& \quad  + \, C\, 2^{l}
\|\nabla B\|_{L^\infty} \, \|\Delta_l B\|^2_{L^2}+ C\, 2^{l} \|\nabla B\|_{L^\infty}\, \|\Delta_l B\|_{L^2} \, \sum_{k\ge l-1} 2^{l-k}\, \|\Delta_k B\|_{L^2}.
\end{align*}
Multiplying the inequality above by
$2^{2\sigma l}$ and summing over $l\ge -1$, invoking the global bound for the $L^2$-norm of $(u, B)$ and the equivalence of the norms
$$
\|f\|_{H^\sigma}^2 \quad \sim \sum_{l\ge -1} 2^{2\sigma l}\, \|\Delta_l f\|_{L^2}^2,
$$
we have
\begin{eqnarray}
&& \|u(t)\|^2_{H^\sigma} + \|B(t)\|^2_{H^\sigma}
+ \,C_0\, \int_0^t \|B(\tau)\|^2_{H^{\sigma+\alpha}}\,d\tau \notag \\
&& \quad \le \|u_0\|^2_{H^\sigma} + \|B_0\|^2_{H^\sigma}
+ C\,\int_0^t \|(\nabla u, \nabla B)\|_{L^\infty}\,(\|u(\tau)\|^2_{H^\sigma} + \|B(\tau)\|^2_{H^\sigma})\,d\tau \notag\\
&& \qquad + \,C\, \sum_{l\ge -1} 2^{(2\sigma+1) l}\,\int_0^t \|\nabla B\|_{L^\infty} \,\|\Delta_l B\|^2_{L^2}\,d\tau  \notag\\
&& \qquad + \,C\, \sum_{l\ge -1} 2^{(2\sigma+1) l}\,\int_0^t \|\nabla B\|_{L^\infty} \,\left(\sum_{k\ge l-1} 2^{l-k}\, \|\Delta_k B\|_{L^2}\right)^2\,d\tau.
\label{hsin}
\end{eqnarray}
To derive the inequality above, we have used Young's inequality for series convolution
\begin{align*}
\sum_{l\ge -1} 2^{2\sigma l} \left(\sum_{k\ge l-1} 2^{l-k}\, \|\Delta_k u\|_{L^2}\right)^2
=&\sum_{l\ge -1} \left(\sum_{k\ge l-1} 2^{(\sigma +1)(l-k)}\, 2^{\sigma k}\|\Delta_k u\|_{L^2}\right)^2\\
\le & C\, \sum_{l\ge -1} 2^{2\sigma l}\|\Delta_l u\|^2_{L^2} \le  C\, \|u\|_{H^\sigma}^2.
\end{align*}
We further bound the last two terms in \eqref{hsin},
\begin{align*}
L_1 \equiv&  \,C\, \sum_{l\ge -1} 2^{(2\sigma+1) l}\,\int_0^t \|\nabla B\|_{L^\infty} \,\|\Delta_l B\|^2_{L^2}\,d\tau,\\
L_2 \equiv&  \,C\, \sum_{l\ge -1} 2^{(2\sigma+1) l}\,\int_0^t \|\nabla B\|_{L^\infty} \,\left(\sum_{k\ge l-1} 2^{l-k}\, \|\Delta_k B\|_{L^2}\right)^2\,d\tau.
\end{align*}
Set $\theta=1-\frac1{2\alpha}$. For $\alpha>\frac12$, $\theta \in (0,1)$.
By H\"{o}lder's inequality,
\begin{align*}
L_1 = & \,C\, \int_0^t \|\nabla B\|_{L^\infty} \sum_{l\ge -1}\,\left(2^{2\sigma l}\|\Delta_l B\|^{2}_{L^2}\right)^\theta\,\,\,\left(2^{2(\sigma +\alpha)l}\, \|\Delta_l B\|^{2}_{L^2}\right)^{(1-\theta)}\,d\tau\\
 \le & \,C\, \int_0^t \|\nabla B\|_{L^\infty} \left(\sum_{l\ge -1}2^{2\sigma l}\|\Delta_l B\|^{2}_{L^2}\right)^\theta\,\,\left(\sum_{l\ge -1}2^{2(\sigma +\alpha)l}\, \|\Delta_l B\|^{2}_{L^2}\right)^{(1-\theta)}\,d\tau \\
\le & \,C\, \int_0^t \|\nabla B\|^{\frac1\theta}_{L^\infty} \|B\|_{H^\sigma}^2\,d\tau + \frac{C_0}{4} \int_0^t\|B(\tau)\|^2_{H^{\sigma+\alpha}} \,d\tau.
\end{align*}
By Young's inequality for series convolution and an interpolation inequality,
\begin{align*}
L_2 = & \,C\, \int_0^t \|\nabla B\|_{L^\infty} \sum_{l\ge -1}
\left(\sum_{k\ge l-1} 2^{(l-k)(\sigma +\frac12)}\, 2^{(\sigma+\frac12)k}\|\Delta_k B\|_{L^2}\right)^2 \,d\tau\\
\le & \,C\, \int_0^t \|\nabla B\|_{L^\infty}\,\|B\|_{H^{\sigma+\frac12}}^2\,d\tau
\le \,C\, \int_0^t \|\nabla B\|_{L^\infty}\,
\|B\|_{H^{\sigma}}^{2\theta}\,\|B\|_{H^{\sigma+\alpha}}^{2(1-\theta)}\,d\tau \\
\le & \,C\, \int_0^t \|\nabla B\|^{\frac1\theta}_{L^\infty} \|B\|_{H^\sigma}^2\,d\tau + \frac{C_0}{4} \int_0^t\|B(\tau)\|^2_{H^{\sigma+\alpha}} \,d\tau.
\end{align*}
Inserting the estimates above in \eqref{hsin} and invoking the embedding inequalities
\begin{equation*}
\|\nabla B\|_{L^\infty} \le C\, \|B\|_{H^\sigma} \qquad \mbox{for $\sigma >1 + \frac{d}{2}$},
\end{equation*}
we have
\begin{eqnarray}
&& \|u(t)\|^2_{H^\sigma} + \|B(t)\|^2_{H^\sigma}
+ \,C_0\, \int_0^t \|B(\tau)\|^2_{H^{\sigma+\alpha}}\,d\tau \notag \\
&& \quad \le \|u_0\|^2_{H^\sigma} + \|B_0\|^2_{H^\sigma}
+ C\,\int_0^t \left(\|u(t)\|^2_{H^\sigma} + \|B(t)\|^2_{H^\sigma}\right)^\gamma\,d\tau,
\end{eqnarray}
for a constant $\gamma>1$. This inequality implies a local bound for $\|u(t)\|^2_{H^\sigma} + \|B(t)\|^2_{H^\sigma}$, namely for some $T_0=T_0(\|(u_0, B_0)\|_{H^\sigma})>0$ such that, for $t\in [0,T_0]$,
$$
\|u(t)\|_{H^\sigma} + \|B(t)\|_{H^\sigma} \le C(u_0, B_0,\alpha, T_0)
$$
and
\begin{equation} \label{timebd}
\int_0^{T_0}\|B(\tau)\|_{H^{\sigma+\alpha}}^2 \,d\tau <\infty.
\end{equation}
This completes the proof of Proposition \ref{localbd}.
\end{proof}

\vskip .3in
\section{Local existence and uniqueness}
\label{sec:mainproof}

This section proves Theorem \ref{main}.

\begin{proof}[Proof of Theorem \ref{main}]
The local existence and uniqueness can be obtained through an approximation procedure. Here we use the Friedrichs method,  a smoothing approach through filtering the high frequencies. For each positive integer $n$, we define
$$
\widehat{\mathcal{J}_n f}(\xi) = \chi_{B_n}(\xi) \, \widehat{f}(\xi),
$$
where $B_n$ denotes the closed ball of radius $n$ centered at $0$ and $\chi_{B_n}$ denotes the
characteristic functions on $B_n$. Denote
$$
H_n^\sigma \equiv\left\{f \in H^\sigma(\mathbb{R}^d), \,\, \mbox{supp}\widehat{f} \subset B_n \right\}.
$$
We seek a solution $(u, B) \in H_n^\sigma$ satisfying
\begin{equation}\label{appmhd}
\left\{
\begin{array}{l}
\partial_t u + \mathcal{J}_n\mathcal{P}(\mathcal{J}_n\mathcal{P}u\cdot\nabla \mathcal{J}_n\mathcal{P} u)
= \mathcal{J}_n\mathcal{P}(\mathcal{J}_n\mathcal{P}B\cdot\nabla \mathcal{J}_n\mathcal{P} B),  \\
\partial_t B + \mathcal{J}_n\mathcal{P}(\mathcal{J}_n\mathcal{P}u\cdot\nabla \mathcal{J}_n\mathcal{P} B) + \mathcal{J}_n\mathcal{P}(\nabla\times((\nabla\times \mathcal{J}_n\mathcal{P} B)\times \mathcal{J}_n\mathcal{P}B))  \\
 \qquad \qquad + (-\Delta)^\alpha \mathcal{J}_n\mathcal{P} B =\mathcal{J}_n\mathcal{P}(\mathcal{J}_n\mathcal{P}B\cdot\nabla \mathcal{J}_n\mathcal{P} u),\\
 u(x,0) =(\mathcal{J}_n u_0)(x), \quad B(x,0) =(\mathcal{J}_n B_0)(x),
\end{array}
\right.
\end{equation}
where $\mathcal{P}$ denotes the projection onto divergence-free vector fields.

\vskip .1in
For each fixed $n\ge 1$, it is not very hard, although tedious, to verify that the right-hand side of
(\ref{appmhd}) satisfies the Lipschitz condition in $H_n^\sigma$ and, by Picard's theorem, (\ref{appmhd})
has a unique global (in time) solution. The uniqueness implies that
$$
\mathcal{J}_n\mathcal{P}u =u, \quad \mathcal{J}_n\mathcal{P} B =B
$$
and ensures the divergence-free conditions $\nabla\cdot u=0$ and $\nabla\cdot B=0$. Then,
(\ref{appmhd}) is simplified to
$$
\left\{
\begin{array}{l}
\partial_t u + \mathcal{J}_n\mathcal{P}(u\cdot\nabla  u)
= \mathcal{J}_n\mathcal{P}(B\cdot\nabla  B),  \\
\partial_t B + \mathcal{J}_n\mathcal{P}(u\cdot\nabla  B)
+ \mathcal{J}_n\mathcal{P}(\nabla\times((\nabla\times B)\times B))
+ (-\Delta)^\alpha  B =\mathcal{J}_n\mathcal{P}(B\cdot\nabla u).
\end{array}
\right.
$$
We denote this solution by $(u^n, B^n)$. As in the proof of Proposition \ref{localbd}, we can show that $(u^n, B^n)$ satisfies
\begin{align}
\|(u^n, B^n)\|^2_{H^\sigma} \le \|(u_0^n, B_0^n)\|^2_{H^\sigma}
+ C\, \int_0^t \|(u^n(s), B^n(s))\|^{2\gamma}_{H^\sigma}\,ds\label{hsigma}
\end{align}
for some $\gamma>1$. Due to $\|(u_0^n, B_0^n)\|_{H^\sigma} \le \|(u_0, B_0)\|_{H^\sigma}$, this
inequality is uniform in $n$. This allows us to obtain a uniform local bound
\begin{align}
\sup_{t\in[0,T_0]} \|(u^n(t), B^n(t))\|_{H^\sigma} \le M(\alpha, T_0, \|(u_0, B_0)\|_{H^\sigma}).
\label{unifbd}
\end{align}
As in (\ref{timebd}), we also have the uniform local bound for the time integral
$$
\int_0^{T_0} \|(\Lambda^\alpha B^n)(s)\|^2_{H^\sigma}\,ds \le M(\alpha, T_0, \|(u_0, B_0)\|_{H^\sigma}).
$$
Furthermore, these uniform bounds allow us to show that
\begin{align}
\|(u^n, B^n) -(u^m, B^m)\|_{L^2} \to 0 \qquad\mbox{as $n,m\to \infty$}.  \label{strongl2}
\end{align}
This is shown through standard energy estimates for $\|(u^n, B^n) -(u^m, B^m)\|_{L^2}$.
The process involves many terms, but most of them can be handled in a standard
fashion (see, e.g., \cite[p.107]{MB}). We provide the detailed energy estimate for the term that is special here, namely the Hall term $\nabla\times((\nabla\times B)\times B)$. In the process of the energy estimates, we
need to bound the term
\begin{align}
& \int \left(\nabla\times((\nabla\times B^n)\times B^n)-\nabla\times((\nabla\times B^m)\times B^m)\right)
\cdot (B^n-B^m)\,dx   \notag\\
&\quad = \int \left(\nabla\times((\nabla\times (B^n-B^m))\times B^n)\right)
\cdot (B^n-B^m)\,dx    \notag\\
&\quad\quad + \int \left(\nabla\times((\nabla\times B^m)\times (B^n-B^m)\right)
\cdot (B^n-B^m)\,dx. \label{energy}
\end{align}
The first term on the right-hand side of (\ref{energy}) is zero,
\begin{align*}
& \int \left(\nabla\times((\nabla\times (B^n-B^m))\times B^n)\right)
\cdot (B^n-B^m)\,dx \\
&\quad = \int ((\nabla\times (B^n-B^m))\times B^n)\cdot (\nabla\times(B^n-B^m))\,dx =0.
\end{align*}
For the second term on the right of (\ref{energy}), by the simple vector identity
\begin{align*}
&\nabla\times((\nabla\times B^m)\times (B^n-B^m)) \\
&\qquad\qquad = (B^n-B^m)\cdot\nabla(\nabla\times B^m)- (\nabla\times B^m)\cdot\nabla(B^n-B^m),
\end{align*}
we have
\begin{align*}
& \left| \int \left(\nabla\times((\nabla\times B^m)\times (B^n-B^m)\right)
\cdot (B^n-B^m)\,dx \right| \\
&\qquad \quad \le \|\nabla(\nabla\times B^m)\|_{L^\frac{d}{\alpha}}\|B^n-B^m\|_{L^{\frac{2d}{d-2\alpha}}}\|B^n-B^m\|_{L^2}\\
&\qquad \quad \le C\|\Lambda^\alpha B^m\|_{H^\sigma}^2\|B^n-B^m\|_{L^2}^2+\frac{1}{8}\|\Lambda^\alpha (B^n-B^m)\|_{L^2}^2,
\end{align*}
where we have used
$$\|\nabla^2 f\|_{L^\frac{d}{\alpha}}\le C\|\Lambda^\alpha f\|_{H^\sigma},\ \ \|f\|_{L^\frac{2d}{d-2\alpha}}\le C\|\Lambda^\alpha f\|_{L^2}.$$
Putting together the estimates for all the terms, we obtain
\begin{align*}
\frac{d}{dt} \|B^n-B^m\|_{L^2}^2 \le  C \|\Lambda^\alpha B^m\|_{H^\sigma}^2\|B^n-B^m\|_{L^2}^2 + C\,\left(\frac1n + \frac1m\right).
\end{align*}
Noticing that $\|\Lambda^\alpha B^m\|_{H^\sigma}^2$ is time integrable, Gronwall's inequality yields
the desired convergence (\ref{strongl2}). Let $(u,B)$ be the limit. Due to the
uniform bound (\ref{unifbd}), $(u,B)\in H^\sigma$ for $t\in [0,T_0]$. By the interpolation
inequality, for any $0<\sigma' <\sigma$,
$$
\|f\|_{H^{\sigma'}} \le C_{\sigma}\,\|f\|^{1-\frac{\sigma'}{\sigma}}_{L^{2}}\, \|f\|^{\frac{\sigma'}{\sigma}}_{H^{\sigma}},
$$
we further obtain the strong convergence
$$
\|(u^n, B^n) -(u, B)\|_{H^{\sigma'}} \to 0\qquad\mbox{as $n\to \infty$}
$$
and consequently, $(u, B)\in C([0,T_0]; H^{\sigma'})$. This strong convergence makes it easy to check
that $(u, B)$ satisfies the Hall-MHD equation in (\ref{HallMHD}). In addition, the time continuity in $(u, B)\in C([0,T_0]; H^{\sigma'})$ allows to show the weak time continuity
$$
(u, B)\in C_W([0,T_0]; H^\sigma)\quad\mbox{or}\quad t\,\mapsto \int (u(x,t), B(x,t)) \cdot \phi(x)\,dx \quad \mbox{is continuous}
$$
for any $\phi\in H^{-\sigma}$.
To show the right (in time) continuity of $\|(u(t), B(t))\|_{H^\sigma}$, we make use
of the energy inequality, for any $t>\widetilde{t}$,
\begin{align*}
\|(u(t), B(t))\|^2_{H^\sigma} \le& \|(u(\widetilde{t}), B(\widetilde{t}))\|^2_{H^\sigma}
+ C\, \int_{\widetilde{t}}^t \|(u(s), B(s))\|^\gamma_{H^{2\sigma}}\,ds,
\end{align*}
This inequality can be obtained in a similar fashion as (\ref{hsigma}). Then,
$$
\lim_{t\to \widetilde{t}+} \|(u(t), B(t))\|_{H^\sigma}
\le \|(u(\widetilde{t}), B(\widetilde{t}))\|_{H^\sigma}.
$$
By the weak continuity in time,
$$
\|(u(\widetilde{t}), B(\widetilde{t}))\|_{H^\sigma} \le \lim_{t\to \widetilde{t}+} \|(u(t), B(t))\|_{H^\sigma}.
$$
The desired right (in time) continuity of $\|(u(t), B(t))\|_{H^\sigma}$ then follows. This completes the
proof of Theorem \ref{main}.
\end{proof}

\vskip .3in
\appendix
\section{Besov spaces}
\label{Besov}

This appendix provides the definitions of some of the functional spaces and related facts
used in the previous sections. Materials presented in this appendix can be found in several
books and many papers (see, e.g., \cite{BCD,BL,MWZ,RS,Tri}).

\vskip .1in
We start with several notations. $\mathcal{S}$ denotes
the usual Schwartz class and ${\mathcal S}'$ its dual, the space of
tempered distributions. ${\mathcal S}_0$ denotes a subspace of ${\mathcal
S}$ defined by
$$
{\mathcal S}_0 = \left\{ \phi\in {\mathcal S}: \,\, \int_{\mathbb{R}^d}
\phi(x)\, x^\gamma \,dx =0, \,|\gamma| =0,1,2,\cdots \right\}
$$
and ${\mathcal S}_0'$ denotes its dual. ${\mathcal S}_0'$ can be identified
as
$$
{\mathcal S}_0' = {\mathcal S}' / {\mathcal S}_0^\perp = {\mathcal S}' /{\mathcal P}
$$
where ${\mathcal P}$ denotes the space of multinomials.

\vspace{.1in} To introduce the Littlewood-Paley decomposition, we
write for each $j\in \mathbb{Z}$
\begin{equation*}\label{aj}
A_j =\left\{ \xi \in \mathbb{R}^d: \,\, 2^{j-1} \le |\xi| <
2^{j+1}\right\}.
\end{equation*}
The Littlewood-Paley decomposition asserts the existence of a
sequence of functions $\{\Phi_j\}_{j\in {\mathbb Z}}\subset{\mathcal S}$ such
that
$$
\mbox{supp} \widehat{\Phi}_j \subset A_j, \qquad
\widehat{\Phi}_j(\xi) = \widehat{\Phi}_0(2^{-j} \xi)
\quad\mbox{or}\quad \Phi_j (x) =2^{jd} \Phi_0(2^j x),
$$
and
$$
\sum_{j=-\infty}^\infty \widehat{\Phi}_j(\xi) = \left\{
\begin{array}{ll}
1&,\quad \mbox{if}\,\,\xi\in {\mathbb R}^d\setminus \{0\},\\
0&,\quad \mbox{if}\,\,\xi=0.
\end{array}
\right.
$$
Therefore, for a general function $\psi\in {\mathcal S}$, we have
$$
\sum_{j=-\infty}^\infty \widehat{\Phi}_j(\xi)
\widehat{\psi}(\xi)=\widehat{\psi}(\xi) \quad\mbox{for $\xi\in {\mathbb
R}^d\setminus \{0\}$}.
$$
In addition, if $\psi\in {\mathcal S}_0$, then
$$
\sum_{j=-\infty}^\infty \widehat{\Phi}_j(\xi)
\widehat{\psi}(\xi)=\widehat{\psi}(\xi) \quad\mbox{for any $\xi\in
{\mathbb R}^d $}.
$$
That is, for $\psi\in {\mathcal S}_0$,
$$
\sum_{j=-\infty}^\infty \Phi_j \ast \psi = \psi
$$
and hence
$$
\sum_{j=-\infty}^\infty \Phi_j \ast f = f, \qquad f\in {\mathcal S}_0'
$$
in the sense of weak-$\ast$ topology of ${\mathcal S}_0'$. For
notational convenience, we define
\begin{equation}\label{del1}
\mathring{\Delta}_j f = \Phi_j \ast f, \qquad j \in {\mathbb Z}.
\end{equation}

\begin{define}
For $s\in {\mathbb R}$ and $1\le p,q\le \infty$, the homogeneous Besov
space $\mathring{B}^s_{p,q}$ consists of $f\in {\mathcal S}_0' $
satisfying
$$
\|f\|_{\mathring{B}^s_{p,q}} \equiv \|2^{js} \|\mathring{\Delta}_j
f\|_{L^p}\|_{l^q} <\infty.
$$
\end{define}

\vspace{.1in}
We now choose $\Psi\in {\mathcal S}$ such that
$$
\widehat{\Psi} (\xi) = 1 - \sum_{j=0}^\infty \widehat{\Phi}_j (\xi),
\quad \xi \in {\mathbb R}^d.
$$
Then, for any $\psi\in {\mathcal S}$,
$$
\Psi \ast \psi + \sum_{j=0}^\infty \Phi_j \ast \psi =\psi
$$
and hence
\begin{equation*}\label{sf}
\Psi \ast f + \sum_{j=0}^\infty \Phi_j \ast f =f
\end{equation*}
in ${\mathcal S}'$ for any $f\in {\mathcal S}'$. To define the inhomogeneous Besov space, we set
\begin{equation} \label{del2}
\Delta_j f = \left\{
\begin{array}{ll}
0,&\quad \mbox{if}\,\,j\le -2, \\
\Psi\ast f,&\quad \mbox{if}\,\,j=-1, \\
\Phi_j \ast f, &\quad \mbox{if} \,\,j=0,1,2,\cdots.
\end{array}
\right.
\end{equation}
\begin{define}
The inhomogeneous Besov space $B^s_{p,q}$ with $1\le p,q \le \infty$
and $s\in {\mathbb R}$ consists of functions $f\in {\mathcal S}'$
satisfying
$$
\|f\|_{B^s_{p,q}} \equiv \|2^{js} \|\Delta_j f\|_{L^p} \|_{l^q}
<\infty.
$$
\end{define}

\vskip .1in
The Besov spaces $\mathring{B}^s_{p,q}$ and $B^s_{p,q}$ with  $s\in (0,1)$ and $1\le p,q\le \infty$ can be equivalently defined by the norms
$$
\|f\|_{\mathring{B}^s_{p,q}}  = \left(\int_{\mathbb{R}^d} \frac{(\|f(x+t)-f(x)\|_{L^p})^q}{|t|^{d+sq}} dt\right)^{1/q},
$$
$$
\|f\|_{B^s_{p,q}}  = \|f\|_{L^p} + \left(\int_{\mathbb{R}^d} \frac{(\|f(x+t)-f(x)\|_{L^p})^q}{|t|^{d+sq}} dt\right)^{1/q}.
$$
When $q=\infty$, the expressions are interpreted in the normal way.

\vskip .1in
Many frequently used function spaces are special cases of Besov spaces. The following proposition
lists some useful equivalence and embedding relations.
\begin{prop}
For any $s\in \mathbb{R}$,
$$
\mathring{H}^s \sim \mathring{B}^s_{2,2}, \quad H^s \sim B^s_{2,2}.
$$
For any $s\in \mathbb{R}$ and $1<q<\infty$,
$$
\mathring{B}^{s}_{q,\min\{q,2\}} \hookrightarrow \mathring{W}_{q}^s \hookrightarrow \mathring{B}^{s}_{q,\max\{q,2\}}.
$$
In particular, $\mathring{B}^{0}_{q,\min\{q,2\}} \hookrightarrow L^q \hookrightarrow \mathring{B}^{0}_{q,\max\{q,2\}}$.
\end{prop}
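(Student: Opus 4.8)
The plan is to treat the two assertions separately, since the Hilbert-space equivalences rest only on Plancherel's theorem while the $L^q$ embeddings require genuine Littlewood--Paley theory. For the identifications $\mathring{H}^s\sim\mathring{B}^s_{2,2}$ and $H^s\sim B^s_{2,2}$, I would argue by Plancherel on the dyadic annuli. Since $\widehat{\Phi}_j$ is supported in $A_j=\{2^{j-1}\le|\xi|<2^{j+1}\}$, on this set $|\xi|\sim 2^j$, whence $|\xi|^{2s}\sim 2^{2js}$. Writing $\|f\|_{\mathring{B}^s_{2,2}}^2=\sum_j 2^{2js}\|\mathring{\Delta}_j f\|_{L^2}^2=\sum_j 2^{2js}\int|\widehat{\Phi}_j(\xi)|^2|\widehat{f}(\xi)|^2\,d\xi$ and using that only neighboring annuli overlap, so that $\sum_j|\widehat{\Phi}_j(\xi)|^2$ is bounded above and below by positive constants, I would sandwich this quantity between constant multiples of $\int|\xi|^{2s}|\widehat{f}(\xi)|^2\,d\xi=\|f\|_{\mathring{H}^s}^2$. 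The inhomogeneous case is identical on the high-frequency blocks, with the single low-frequency block $\Delta_{-1}f=\Psi\ast f$ accounting for the contribution of $\langle\xi\rangle^{2s}\sim 1$ on $|\xi|\lesssim 1$.

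For the embeddings with $1<q<\infty$, the central analytic tool is the Littlewood--Paley square-function characterization of $L^q$: for $1<q<\infty$ one has $\|g\|_{L^q}\sim\big\|\big(\sum_j|\mathring{\Delta}_j g|^2\big)^{1/2}\big\|_{L^q}$. I would apply this to $g=\Lambda^s f$. The symbol of $\mathring{\Delta}_j\Lambda^s$ equals $2^{js}$ times $\widehat{\Phi}_0(2^{-j}\xi)(2^{-j}|\xi|)^s$, a fixed-dilation rescaling of the single smooth, compactly supported symbol $\widehat{\Phi}_0(\eta)|\eta|^s$; consequently $\mathring{\Delta}_j\Lambda^s=2^{js}\widetilde{\Delta}_j$, where the $\widetilde{\Delta}_j$ form a Littlewood--Paley-type family whose blocks are $L^q$-bounded uniformly in $j$. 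Feeding this into the square function gives $\|f\|_{\mathring{W}^s_q}=\|\Lambda^s f\|_{L^q}\sim\big\|\big(\sum_j 2^{2js}|\mathring{\Delta}_j f|^2\big)^{1/2}\big\|_{L^q}$, which is exactly the $L^q(\ell^2)$ norm of the sequence $(2^{js}\mathring{\Delta}_j f)_j$; the Besov norm $\|f\|_{\mathring{B}^s_{q,r}}$ is the $\ell^r(L^q)$ norm of the same sequence. The task thus reduces to comparing $L^q(\ell^2)$ with $\ell^r(L^q)$.

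This comparison splits according to whether $q\ge 2$ or $q\le 2$ and rests on two elementary facts: Minkowski's inequality for mixed norms, giving $\ell^r(L^q)\hookrightarrow L^q(\ell^r)$ when $q\ge r$ and $L^q(\ell^r)\hookrightarrow\ell^r(L^q)$ when $r\ge q$; and the monotonicity of the $\ell^p$ norms, giving the pointwise bound $\big(\sum_j|a_j|^q\big)^{1/q}\le\big(\sum_j|a_j|^2\big)^{1/2}$ for $q\ge 2$ and its reverse for $q\le 2$. When $q\ge 2$, so $\min\{q,2\}=2$ and $\max\{q,2\}=q$, I would obtain $\mathring{B}^s_{q,2}=\ell^2(L^q)\hookrightarrow L^q(\ell^2)=\mathring{W}^s_q$ from Minkowski with $r=2\le q$, and $\mathring{W}^s_q=L^q(\ell^2)\hookrightarrow L^q(\ell^q)=\ell^q(L^q)=\mathring{B}^s_{q,q}$ from the pointwise $\ell^2\hookrightarrow\ell^q$ bound followed by Fubini. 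When $q\le 2$, so $\min\{q,2\}=q$ and $\max\{q,2\}=2$, I would use instead $\mathring{B}^s_{q,q}=\ell^q(L^q)=L^q(\ell^q)\hookrightarrow L^q(\ell^2)=\mathring{W}^s_q$ (Fubini, then the pointwise $\ell^q\hookrightarrow\ell^2$ bound) and $\mathring{W}^s_q=L^q(\ell^2)\hookrightarrow\ell^2(L^q)=\mathring{B}^s_{q,2}$ (Minkowski with $r=2\ge q$). The ``in particular'' statement is the special case $s=0$, where $\mathring{W}^0_q=L^q$ and both embeddings are read off directly from the square-function equivalence.

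The main obstacle is \emph{the one genuinely deep ingredient}, the square-function characterization of $L^q$ for $q\ne 2$, which rests on vector-valued Calder\'on--Zygmund theory (equivalently, Khintchine's inequality combined with the Mikhlin multiplier theorem) and which is precisely what forces the restriction $1<q<\infty$, the equivalence failing at the endpoints $q=1$ and $q=\infty$. Once this and the uniform $L^q$-boundedness of the rescaled blocks $\widetilde{\Delta}_j=2^{-js}\mathring{\Delta}_j\Lambda^s$ are in hand, the remaining work is the elementary Plancherel computation of the first paragraph and the routine bookkeeping with Minkowski's inequality and $\ell^p$ monotonicity.
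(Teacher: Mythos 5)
The paper does not actually prove this proposition: it is stated in the appendix as a list of standard facts, with the reader referred to the cited texts (Bahouri--Chemin--Danchin, Bergh--L\"ofstr\"om, Triebel, etc.). So there is no in-paper argument to compare against; what you have written is essentially the textbook proof, and it is correct. The Plancherel computation for $p=q=2$, using $|\xi|\sim 2^j$ on $\mathrm{supp}\,\widehat{\Phi}_j$ together with the two-sided bound $c\le\sum_j|\widehat{\Phi}_j(\xi)|^2\le C$ coming from finite overlap and $\sum_j\widehat{\Phi}_j=1$, is exactly right, as is the low-frequency modification for the inhomogeneous case. For the $L^q$ embeddings, your reduction to comparing $L^q(\ell^2)$ with $\ell^r(L^q)$ via Minkowski's inequality and $\ell^p$-monotonicity is the standard route, and the case analysis in $q\gtrless 2$ is carried out correctly. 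One point deserving slightly more care: to replace $\mathring{\Delta}_j\Lambda^s$ by $2^{js}\mathring{\Delta}_j$ \emph{inside} the square function you need the uniform $L^q(\ell^2)\to L^q(\ell^2)$ bound for the family $\widetilde{\Delta}_j=2^{-js}\mathring{\Delta}_j\Lambda^s$ (together with $\widetilde{\Delta}_j=\widetilde{\Delta}_j(\Delta_{j-1}+\Delta_j+\Delta_{j+1})$ to go back), not merely uniform boundedness of each block on $L^q$; this is the vector-valued Calder\'on--Zygmund input you correctly identify at the end, so it is a matter of phrasing rather than a gap. Your closing remark that this single deep ingredient is what forces $1<q<\infty$ is also accurate.
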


\vskip .1in
For notational convenience, we write $\Delta_j$ for
$\mathring{\Delta}_j$. There will be no confusion if we keep in mind that
$\Delta_j$'s associated with the homogeneous Besov spaces is defined in
(\ref{del1}) while those associated with the inhomogeneous Besov
spaces are defined in (\ref{del2}). Besides the Fourier localization operators $\Delta_j$,
the partial sum $S_j$ is also a useful notation. For an integer $j$,
$$
S_j \equiv \sum_{k=-1}^{j-1} \Delta_k,
$$
where $\Delta_k$ is given by (\ref{del2}). For any $f\in \mathcal{S}'$, the Fourier
transform of $S_j f$ is supported on the ball of radius $2^j$.

\vskip .1in
Bernstein's inequalities are useful tools in dealing with Fourier localized functions
and these inequalities trade integrability for derivatives. The following proposition
provides Bernstein type inequalities for fractional derivatives.
\begin{prop}\label{bern}
Let $\alpha\ge0$. Let $1\le p\le q\le \infty$.
\begin{enumerate}
\item[1)] If $f$ satisfies
$$
\mbox{supp}\, \widehat{f} \subset \{\xi\in \mathbb{R}^d: \,\, |\xi|
\le K 2^j \},
$$
for some integer $j$ and a constant $K>0$, then
$$
\|(-\Delta)^\alpha f\|_{L^q(\mathbb{R}^d)} \le C_1\, 2^{2\alpha j +
j d(\frac{1}{p}-\frac{1}{q})} \|f\|_{L^p(\mathbb{R}^d)}.
$$
\item[2)] If $f$ satisfies
\begin{equation*}\label{spp}
\mbox{supp}\, \widehat{f} \subset \{\xi\in \mathbb{R}^d: \,\, K_12^j
\le |\xi| \le K_2 2^j \}
\end{equation*}
for some integer $j$ and constants $0<K_1\le K_2$, then
$$
C_1\, 2^{2\alpha j} \|f\|_{L^q(\mathbb{R}^d)} \le \|(-\Delta)^\alpha
f\|_{L^q(\mathbb{R}^d)} \le C_2\, 2^{2\alpha j +
j d(\frac{1}{p}-\frac{1}{q})} \|f\|_{L^p(\mathbb{R}^d)},
$$
where $C_1$ and $C_2$ are constants depending on $\alpha,p$ and $q$
only.
\end{enumerate}
\end{prop}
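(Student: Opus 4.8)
The plan is to treat $(-\Delta)^\alpha$ acting on a frequency-localized function as convolution against an explicit kernel, to extract all the $j$-dependence from a scaling of that kernel, and then to invoke Young's convolution inequality; this single mechanism yields both assertions.

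For the first part, fix a radial $\phi\in C_c^\infty(\mathbb{R}^d)$ with $\phi\equiv 1$ on $\{|\xi|\le K\}$ and $\mathrm{supp}\,\phi\subset\{|\xi|\le 2K\}$. Since $\widehat{f}$ lives in the ball of radius $K2^j$, I would write $\widehat{(-\Delta)^\alpha f}(\xi)=|\xi|^{2\alpha}\phi(2^{-j}\xi)\,\widehat{f}(\xi)$, so that $(-\Delta)^\alpha f=g_j * f$ with $\widehat{g_j}(\xi)=|\xi|^{2\alpha}\phi(2^{-j}\xi)$. Setting $m(\eta)=|\eta|^{2\alpha}\phi(\eta)$ and using homogeneity, $\widehat{g_j}(\xi)=2^{2\alpha j}m(2^{-j}\xi)$, hence $g_j(x)=2^{j(d+2\alpha)}\check{m}(2^j x)$. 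Choosing $r$ by $1+\tfrac1q=\tfrac1r+\tfrac1p$ (so $r\ge1$ precisely because $p\le q$) and applying Young's inequality gives $\|(-\Delta)^\alpha f\|_{L^q}\le\|g_j\|_{L^r}\|f\|_{L^p}$, while scaling of $g_j$ yields $\|g_j\|_{L^r}=2^{2\alpha j+jd(\frac1p-\frac1q)}\|\check{m}\|_{L^r}$. This is exactly the claimed bound, provided $\|\check{m}\|_{L^r}<\infty$ for the relevant range of $r$.

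The finiteness of $\|\check m\|_{L^r}$ is the one genuinely delicate point, and I expect it to be the main obstacle: because the ball contains the origin, $m(\eta)=|\eta|^{2\alpha}\phi(\eta)$ is only finitely smooth at $\eta=0$ when $2\alpha\notin2\mathbb{Z}$, so $\check m$ fails to be Schwartz. To control its decay I would use that $|\cdot|^{2\alpha}$ has inverse Fourier transform $c_{d,\alpha}\,|x|^{-d-2\alpha}$ as a homogeneous tempered distribution, so that $\check m=c_{d,\alpha}\,|x|^{-d-2\alpha}\ast\check\phi$ with $\check\phi$ Schwartz; the convolution is smooth and bounded near the origin and satisfies $|\check m(x)|\lesssim|x|^{-d-2\alpha}$ at infinity. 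Consequently $\check m\in L^r(\mathbb{R}^d)$ for every $r\in[1,\infty]$ when $\alpha>0$, while $\check m$ is Schwartz in the trivial case $\alpha=0$, which closes the first part.

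For the second part, the upper bound is immediate from the first part, since the annulus is contained in the ball $\{|\xi|\le K_2 2^j\}$. For the lower bound I would invert the multiplier on the annulus, where $|\xi|^{2\alpha}$ is bounded below and smooth: pick a radial $\psi\in C_c^\infty$ with $\mathrm{supp}\,\psi$ avoiding the origin and $\psi\equiv1$ on $\{K_1\le|\eta|\le K_2\}$, and write $\widehat f(\xi)=|\xi|^{-2\alpha}\psi(2^{-j}\xi)\,\widehat{(-\Delta)^\alpha f}(\xi)$, that is $f=h_j\ast(-\Delta)^\alpha f$ with $\widehat{h_j}(\xi)=2^{-2\alpha j}\,\tilde m(2^{-j}\xi)$ and $\tilde m(\eta)=|\eta|^{-2\alpha}\psi(\eta)$. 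The crucial simplification here is that $\tilde m$ is smooth with compact support bounded away from the origin, so $\check{\tilde m}$ is Schwartz and $\|h_j\|_{L^1}=2^{-2\alpha j}\|\check{\tilde m}\|_{L^1}$; Young's inequality with exponents $(1,q,q)$ then gives $\|f\|_{L^q}\le 2^{-2\alpha j}\|\check{\tilde m}\|_{L^1}\,\|(-\Delta)^\alpha f\|_{L^q}$, which is the asserted lower bound. In short, the annular support is exactly what removes the origin singularity that complicated the first part, leaving only Schwartz kernels.
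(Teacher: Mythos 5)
Your argument is correct, but there is nothing in the paper to compare it against: Proposition~\ref{bern} is stated in the appendix as standard background material, with the proof deferred to the cited references (e.g.\ \cite{BCD,MWZ}), so the paper contains no proof of its own. Your proof is the classical one found in those references: write $(-\Delta)^\alpha f$ as convolution with a rescaled kernel, pull out the $2^j$-dependence by scaling, and apply Young's inequality with $1+\tfrac1q=\tfrac1r+\tfrac1p$. You also correctly isolate the one genuinely delicate point, namely that in the ball case the symbol $|\eta|^{2\alpha}\phi(\eta)$ is only finitely smooth at the origin when $2\alpha\notin 2\mathbb{Z}$, so naive integration by parts gives decay $|x|^{-N}$ only for integers $N<d+2\alpha$, which is insufficient for $L^1$ membership when $2\alpha<1$; the sharp decay $|\check m(x)|\lesssim |x|^{-d-2\alpha}$ via the homogeneous-distribution identity closes this. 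Two small points you should tidy up: (i) the case $2\alpha\in 2\mathbb{Z}_{>0}$, which you skip, is in fact the easy one since $m$ is then $C_c^\infty$ and $\check m$ is Schwartz; (ii) the identity $\check m=c_{d,\alpha}\,|x|^{-d-2\alpha}\ast\check\phi$ involves the finite-part regularization of $|x|^{-d-2\alpha}$, and the asserted bound at infinity requires the standard splitting of the pairing into $|y|<|x|/2$ and $|y|\ge|x|/2$ (or, alternatively, a dyadic decomposition of $m$ near the origin, summing $\|\check{m_k}\|_{L^1}\lesssim 2^{-2\alpha k}$); either route works but should be stated. The annulus case is handled exactly as it should be.
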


\vskip .4in
\section*{Acknowledgements}
Chae was partially supported by NRF grant No.2006-0093854 and No.2009-0083521. Wu was partially supported by NSF grant DMS1209153 and the AT\&T Foundation at Oklahoma State University.

\vskip .4in

\end{document}